\newtheorem{proposition}{Proposition}[section]
\newtheorem{claim}{Claim}[]
\newtheorem{case}{Case}[]
\newtheorem{subcase}{Sub-Case}[case]
\newtheorem{conjecture}{Conjecture}[section]
\newtheorem{corollary}{Corollary}[section]
\newtheorem{lemma}{Lemma}[section]
\newtheorem{theorem}{Theorem}[section]
\title{A note on the second neighborhood problem for $k$-anti-transitive and $m$-free digraphs}
\author{Dania Mezher\thanks{Corresponding Author: d.mezher@st.ul.edu.lb, dania.mezher.98@gmail.com}}
\author{Moussa Daamouch\thanks{moussa.daamouch@ul.edu.lb, moussadaamouch1923@gmail.com}} 
\affil{KALMA Laboratory, Department of Mathematics, Faculty of Sciences I, Lebanese University, Beirut, Lebanon}
\date{}
\begin{document}

\maketitle

\begin{abstract}
Seymour’s Second Neighborhood Conjecture (SSNC) asserts that every finite oriented
graph has a vertex whose second out-neighborhood is at least as large as its first outneighborhood.
Such a vertex is called a Seymour vertex. A digraph $D = (V, E)$ is $k$-anti-transitive if for every pair of vertices $u, v \in V$, the existence of a directed path of length $k$ from $u$ to $v$ implies that $(u, v) \notin E$.
An $m$-free digraph is digraph having no directed cycles with length at most $m$.
In this paper, we prove that if $D$ is $k$-anti-transitive and $(k-4)$-free digraph, then $D$ has a Seymour vertex. As a consequence, a special case of Caccetta-Haggkvist Conjecture holds on 7-anti-transitive oriented graphs. This work extends recently known results.
\end{abstract}
\section{Terminology and introduction}
In this paper, A directed graph (or a digraph) $D$ is a pair of two disjoint sets $(V, E)$, where $V$ is non-empty and $E \subset V \times V$. The set $V$, denoted by $V(D)$, is called the vertex set of $D$. While $E$, denoted by $E(D)$, is called the arc set of $D$. Each pair $(u, v)$ signifies an arc from a vertex $u$ to a vertex $v$. All the directed graphs discussed in this paper are finite oriented graph, meaning that $V$ is finite, there are no self-loops $(u, u) \notin E$, and for any pair of vertices $u$ and $v$ in $D$, there is at most one arc connecting them. We may write $u \rightarrow v$ meaning that $(u, v) \in E(D)$. We denote by $x_0 x_1 \cdots x_k$ a directed $x_0x_k$-path ($P_k$) and we may write $x_0 \rightarrow x_1 \rightarrow \cdots \rightarrow x_k$. A directed $k$-cycle ($C_k$) is denoted by $x_0 \cdots x_{k-1} x_0$, and we may write $x_0 \rightarrow \cdots \rightarrow x_{k-1} \rightarrow x_0$. The length of a directed path (or cycle) is the number of its arcs. The distance $\text{$dist$}(v, u)$ from a vertex $v$ to a vertex $u$ is the length of a shortest directed path from $v$ to $u$. The out-neighborhood $N_D^+(v)$ of a vertex $v$ is defined as the set of all vertices at distance 1 from $v$, that is
$N_D^+(v) = \{u \in V : \text{dist}(v,u)=1 \}$. The out-degree of a vertex $v$, denoted $d_D^+(v)$, is defined as $d^+_D(v) = |N^+_D(v)|$. The second out-neighborhood $N_D^{++}(v)$ of a vertex $v$ is defined as the set of all vertices $w$ at distance 2 from $v$, that is $N_G^{++}(v) = \{w \in V \colon \text{dist}(v, w) = 2\}$. The second out-degree of a vertex $v$, denoted $d_D^{++}(v)$, is defined as $d^{++}_D(v) = |N^{++}_D(v)|$. Let $\delta^+$ denote the minimum out-degree in $D$. Given a subset $R$ of vertices from $V(D)$, the subdigraph of $D$ induced by $R$ is represented as $D[R]$. The out-neighborhood of a vertex $v$ restricted to $R$ is: $N^+_R(v)  = N_D^+(v) \cap R$, and its corresponding out-degree within $R$ is: $d^+_R(v) = |N^+_R(v)|$. We may write $\overline{R}=V(D) \setminus R$. For non adjacent vertices $u$ and $v$, a directed $uv$-path of length $k$ will be denoted by $P^*_k$.\\

In 1990, Paul Seymour proposed the following conjecture.

\begin{conjecture}[Seymour's Second Neighborhood Conjecture]
For any oriented simple graph $D$, there is a vertex $v \in V$ such that $d^+(v) \leq d^{++}(v)$.
\end{conjecture}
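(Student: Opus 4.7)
\textbf{The plan} is to attempt a proof of SSNC by contradiction, combining induction on $|V(D)|$ with a weighted median-order argument in the spirit of Fisher's theorem for tournaments. Let $D$ be a counterexample of minimum order, so every vertex $v$ satisfies $d^+(v) > d^{++}(v)$, and aim to derive a contradiction from the structural constraints that minimality imposes.

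First I would extract consequences of minimality: for each $v \in V(D)$, the digraph $D - v$ admits a Seymour vertex $u_v$, and comparing $N^+(u_v)$ and $N^{++}(u_v)$ in $D$ versus $D - v$ forces a specific local relation between $v$ and $u_v$ (either $v \in N^+(u_v)$ and deletion removes an out-neighbor, or $v$ lies on every directed $u_v w$-path of length $2$ for some $w \in N^{++}(u_v)$). Ranging $v$ over $V(D)$ should build a rigid pattern of directed $2$-paths. Second, I would apply a weighted median-order argument: fix an ordering $v_1, \ldots, v_n$ of $V(D)$ that maximizes a suitable weighted count of forward arcs and analyze the final vertex $v_n$. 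In tournaments, a local-exchange argument on such an ordering already produces a Seymour vertex; the goal here would be to combine the rigid $2$-path structure from the first step with an appropriately chosen weight function so that the local-exchange argument survives the presence of non-adjacent pairs in a general oriented graph.

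\textbf{The main obstacle} is that SSNC is an open problem since 1990, and the present paper only resolves the restricted $k$-anti-transitive $(k-4)$-free class stated in its title. The root difficulty is that failure of the Seymour property is not a monotone quantity under vertex deletion: removing $v$ can both shrink $N^{++}(u)$ (destroying $2$-paths $u \to v \to w$ with $w$ not otherwise reachable from $u$ in two steps) and shrink $N^+(u)$ (if $v \in N^+(u)$), so no known potential function strictly improves under the minimality assumption. Moreover, the median-order trick is known not to survive verbatim past tournaments. A complete proof would therefore require either a genuinely new global invariant for oriented graphs or a tight fractional/probabilistic relaxation, tools that go beyond what the anti-transitivity and girth hypotheses used elsewhere in this paper provide.
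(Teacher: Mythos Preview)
Your self-assessment is accurate: the statement is a \emph{conjecture}, and the paper contains no proof of it whatsoever. It is stated only as background; the paper's main theorem treats the restricted class of $k$-anti-transitive $(k-4)$-free oriented graphs. So there is nothing in the paper to compare your approach against.

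Regarding your proposal itself, it is not a proof but a plan together with an honest explanation of why the plan fails. The two ingredients you list are each well known to be insufficient on their own. For the induction step, you correctly note that deleting a vertex can change $d^+$ and $d^{++}$ in opposite directions, so minimality of a counterexample yields only weak local information and does not propagate to a contradiction. For the median-order step, the Havet--Thomass\'e argument uses crucially that in a tournament every pair is adjacent, so a backward non-arc cannot occur; in a general oriented graph the local-exchange inequality fails exactly at nonadjacent pairs, and no weighting is known that repairs this. Your proposal does not supply the missing idea (a new global invariant or a fractional relaxation), and you say so explicitly.

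In short: there is a genuine gap, namely that no actual argument is given, and you have correctly diagnosed that this gap is the open problem itself. That is the right conclusion here, since the paper does not claim to prove the conjecture.
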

\noindent
A vertex $v$ satisfying $d^+(v) \leq d^{++}(v)$ is referred to as a \emph{Seymour vertex}.\\

In the study of graph theory, when researchers encounter a challenging problem, it is a common strategy to limit the problem to a specific family of graphs or digraphs. Seymour's conjecture, as an instance, has been proven exclusively for a few selected classes of digraphs. Notably, in 1996, Fisher~\cite{fisher1996squaring} took a significant step by proving SSNC for a family of digraphs known as tournaments. Progress continued into the new millennium: in 2000, Havet and Thomassé~\cite{havet2000median} offered a streamlined proof of SSNC for tournaments, and in 2001, Kaneko and Locke~\cite{kaneko2001minimum} established the following theorem.

\begin{theorem}
\label{t.1}
\textbf{~\cite{kaneko2001minimum}:} Let $D$ be an oriented graph, and let $\delta^+$ be its minimum out-degree. If $\delta^+ \leq 6$, then $D$ has a Seymour vertex.
\end{theorem}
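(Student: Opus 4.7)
The plan is to argue by contradiction. Suppose no vertex of $D$ is a Seymour vertex, so every $w \in V(D)$ satisfies $d^{++}(w) < d^+(w)$. Choose $v$ with $d^+(v) = \delta^+ = k \leq 6$, and let $A = N^+(v)$, $B = N^{++}(v)$, so $|B| \leq k - 1$. The first structural observation is that for every $a \in A$, the out-neighborhood $N^+(a)$ is contained in $(A \setminus \{a\}) \cup B$: any vertex $u$ with $a \to u$ and $u \notin A \cup \{v\}$ lies at distance $2$ from $v$ (hence in $B$), and $a \to v$ is forbidden since $D$ is oriented and $v \to a$. Because $d^+(a) \geq k$ by the minimality of $v$, this already forces $|B| \geq 1$, and more generally it gives $d^+_A(a) + d^+_B(a) \geq k$ for each $a \in A$.

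Next, I would set up a counting framework by summing $d^+(a) \geq k$ over $a \in A$ to get $|E(D[A])| + e(A,B) \geq k|A|$, where $e(A,B)$ denotes the number of arcs from $A$ to $B$. Since $D[A]$ is oriented on $k$ vertices, $|E(D[A])| \leq \binom{k}{2}$, so $e(A,B) \geq k^2 - \binom{k}{2} = \binom{k+1}{2}$. Averaging gives some $b^* \in B$ with many in-neighbors in $A$, which for the small values $k \leq 3$ already contradicts the orientedness/pigeonhole bounds. For $k \in \{4,5\}$, I would refine by applying the non-Seymour hypothesis at a carefully chosen $a^* \in A$ maximizing $d^+_A(a^*)$, and simultaneously at vertices of $B$: each $b \in B$ also satisfies $d^{++}(b) < d^+(b)$, which forces a dense short-cycle structure on $D[A \cup B]$ that eventually collides with the counting above.

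The main obstacle is the case $k = 6$, where the inequalities are nearly tight: $|A| = 6$, $|B| \in \{1,\ldots,5\}$, and the arc deficit is only a few units. Here I would split on $|B|$, and within each subcase on the internal structure of $D[A]$, with the goal of exhibiting some vertex $w \in A \cup B$ with $d^{++}(w) \geq d^+(w)$. The delicate subcase is $|B| = 5$, where I expect to need an extremal trick: select $a^* \in A$ minimizing $d^+_B(a^*)$, identify the unique (or few) $b \in B$ it misses, and combine the non-Seymour hypothesis at $a^*$ with that at the missed $b$'s. This should force an arc configuration inside $A \cup B$ that either violates orientedness (a pair of mutually adjacent vertices) or produces a vertex of $A \cup B$ whose second out-neighborhood absorbs most of the rest, thereby closing the contradiction.
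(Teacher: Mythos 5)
First, note that the paper does not prove this statement at all: Theorem~1.1 is quoted from Kaneko and Locke~\cite{kaneko2001minimum} and used as a black box, so a ``proof'' here would have to reproduce the content of that entire paper. Your proposal does not do this. What you actually establish is only the easy preliminary layer: that $N^+(a)\subseteq (A\setminus\{a\})\cup B$ for $a\in A$, and the count $e(A,B)\ge k^2-\binom{k}{2}=\binom{k+1}{2}$. Combined with $e(A,B)\le |A||B|\le k(k-1)$ this yields a contradiction only for $k\le 2$; already at $k=3$ the two bounds meet ($6\le e(A,B)\le 6$) and you need a further argument, so even your claim that ``$k\le 3$'' follows from pigeonhole is not quite right as stated. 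Everything from $k=4$ onward --- which is the entire substance of the theorem, the case $k=6$ in particular --- is deferred to phrases like ``I would refine\ldots'', ``this should force\ldots'', ``I expect to need an extremal trick''. That is a plan, not a proof, and the hard work has not been done.

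There is also a structural problem with the plan itself. You intend to derive a contradiction by applying the non-Seymour hypothesis at vertices of $B$ (and at a chosen $a^*\in A$) to force ``a dense short-cycle structure on $D[A\cup B]$''. But while $N^+(a)\subseteq A\cup B$ for $a\in A$, nothing confines $N^+(b)$ for $b\in B$, nor $N^{++}(w)$ for any $w\in A\cup B$, to the set $A\cup B$; these sets can live entirely outside it. So the hypothesis $d^{++}(w)<d^+(w)$ at such vertices does not localize to $D[A\cup B]$, and the hoped-for collision with your arc count inside $A\cup B$ is not actually set up. The arguments that do work in this area (see Lemma~2.1 of the present paper) exploit the opposite direction: a vertex $x$ with $d^+_{N^+(v)}(x)=0$ has all of its at least $\delta^+$ out-neighbours \emph{outside} $N^+(v)$, hence inside $N^{++}(v)$. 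A correct proof for $\delta^+\le 6$ would have to start from that reduction (so $D[A]$ has minimum out-degree at least $1$) and then carry out a genuine, case-by-case structural analysis of oriented graphs on at most $6$ vertices with no sink; your proposal names this territory but does not enter it.
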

In 2007, Fidler and Yuster~\cite{fidler2007remarks} substantiated the case of tournaments missing a matching. Moving forward, Ghazal~\cite{ghazal2012, ghazal2013contribution, ghazal2015remark}, during the years 2012, 2013, and 2015, affirmed that the conjecture stands true for several classes of directed graphs, including tournaments missing an \( n \)-generalized stars and other categories of oriented graphs. In the same manner, some attempts were made to settle SSNC for tournaments missing a specific graph (see~\cite{al2021second,daamouch2024second,dara2022extending}).\\

Another approach to SSNC is to determine the maximum value $\lambda$ such that every oriented graphs has a vertex $v$ and $d^{++}(v) \geq \lambda d^+(v)$. In 2003, Chen, Shen, and Yuster \cite{chen2003second} proved that in every oriented graph, there exists a vertex $v$ such that $d^{++}(v) \geq \lambda d^+(v)$ where $ \lambda = 0.657\cdots$ is the unique real root in the interval $(0,1)$ of $2x^3+x^2-1=0$. In 2017, Liang and Xu \cite{liang2017seymour} improved this result for the class of digraphs having no directed cycles with length at most $m$, called $m$-free digraphs, by showing that for any $m$-free digraph $D$, there exists $v \in V(D)$ and a real number $\lambda_m \in (0,1)$ such that $d^{++}(v) \geq \lambda_m d^+(v)$, and $\lambda_m \rightarrow 1$ while $m \rightarrow \infty$. Note that the class of $m$-free digraphs is a subclass of digraphs with forbidden $C_m$, and SSNC for both still remains open.\\

In a separate development in 2012, Galena-Sánchez and Hernández-Cruz~\cite{hernandez2012k} introduced the class of \( k \)-transitive digraphs and the class of \( k \)-quasi-transitive digraphs. A digraph \( D \) is \( k \)-transitive  if, for any vertices \( u, v \) in \( D \), the existence of a directed path from \( u \) to \( v \) of length \( k \) implies that there is a arc from \( u \) to \( v \) in \( D \). A digraph \( D \) is \( k \)-quasi-transitive if, for any \( u, v \) in \( D \), the existence of a directed path from \( u \) to \( v \) of length \( k \) implies that there is a an arc from \( u \) to \( v \) or from \( v \) to \( u \) in \( D \). In terms of forbidden (not necessarily induced) subdigraphs, $k$-transitive oriented graphs are oriented graphs with forbidden $P_k^*$ and $C_{k+1}$, and $k$-quasi-transitive oriented graphs are oriented graphs with forbidden $P_k^*$. In 2017, García-Vásquez and Hernández-Cruz~\cite{garcia2017some} demonstrated that every 4-transitive oriented graph has a Seymour vertex. Additionally, Gutin and Li~\cite{gutin2017seymour} proved SSNC for quasi-transitive oriented graphs, specifically those that are 2-quasi-transitive. Moving ahead, recent works, as seen in~\cite{daamouch2020seymour, Daamouch2024}, validated SSNC for 3-quasi-transitive oriented graphs as well as for some instances of \( k \)-transitive oriented graphs, specifically for $k \leq \delta^+ +4$ and consequently for $k \leq 11$ as well as for $k$-transitive digraphs that are $(\frac{k}{2}-1)$-free.\\

The vertices $x$, $y$, and $z$ are said to form a transitive triangle if we have $x \rightarrow y \rightarrow z$ and also $x \rightarrow z$. It is easily seen that a transitive triangle is obtained from a directed triangle $C_3$ by reversing the direction of exactly one arc. We denote by $C(n,1)$ an oriented cycle on $n+1$ vertices obtained from a directed cycle $C_{n+1}$ by reversing the direction of exactly one arc. A digraph $D$ is \emph{$k$-anti-transitive} if for any $u, v \in V(D)$, the existence of a directed $uv$-path of length $k$ implies $(u,v) \notin E(D)$. In other words, the class of $k$-anti-transitive digraphs are that of digraphs with forbidden $C(k,1)$. In 2009, Brantner, Brockman, Kay, and Snively~\cite{brantner2009contributions} proved that every minimum out-degree vertex in 2-anti-transitive digraphs (digraphs without transitive triangles) is a Seymour vertex. In 2020, this result is improved and extended in~\cite{daamouch2020ant,daamouch2021seymour} to $k$-anti-transitive digraphs for $k=\{3,4,5\}$. later, in 2021, Hassan et al.~\cite{hassan2021seymour} proved SSNC for 6-anti-transitive digraphs. It is seen that the difficulty of SSNC for $k$-anti-transitive digraphs is increasing with respect to $k$, especially with the lack of structural description of these digraphs. Note that every digraph $D$ is $k$-anti-transitive for $k$ greater than the length of a longest directed path in $D$. So if a property holds for $k$-anti-transitive digraphs with arbitrary $k$, then this property holds for all digraphs. As it is difficult to settle SSNC for the class of $k$-anti-transitive digraphs when $k$ is an arbitrary positive integer, it is convenient to combine this class of digraphs with another. For instance, in~\cite{daamouch2021seymour}, the SSNC is proven for digraphs that are both $k$-anti-transitive and $(k-3)$-free. In this paper, we prove SSNC for digraphs that are $k$-anti-transitive and $(k-4)$-free. As a consequence, we show that a special case of Caccetta-Haggkvist Conjecture holds on 7-anti-transitive oriented graphs.
\section{Some requisite results}
\begin{lemma}
\label{l.1}
~\cite{daamouch2021seymour}
Let $D$ be an oriented graph, and let $\delta^+$ be its minimum out-degree. Let $v \in V(D)$ such that $d^+(v) = \delta^+$. If $D[N^+(v)]$ has a sink vertex $x$, i.e., $d^{+}_{N^+(v)}(x) = 0$, then $v$ is a Seymour vertex.
\end{lemma}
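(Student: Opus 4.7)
The plan is to exploit the sink vertex $x$ directly, showing that its out-neighborhood is forced to sit inside $N^{++}(v)$, which then yields the bound via the minimum out-degree condition.

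First I would unpack what it means for $x$ to be a sink of $D[N^+(v)]$. By assumption $d^+_{N^+(v)}(x)=0$, so no arc from $x$ lands in $N^+(v)$; equivalently, $N^+(x)\cap N^+(v)=\emptyset$. Next I would observe that because $D$ is an oriented graph and $v\to x$, we cannot also have $x\to v$, so $v\notin N^+(x)$. Hence $N^+(x)\subseteq V(D)\setminus(\{v\}\cup N^+(v))$.

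The key step is then to argue that each $w\in N^+(x)$ lies in $N^{++}(v)$. Indeed, the path $v\to x\to w$ has length $2$, so $\mathrm{dist}(v,w)\leq 2$. Since $w\neq v$ and $w\notin N^+(v)$, in fact $\mathrm{dist}(v,w)=2$, giving $w\in N^{++}(v)$. Consequently $N^+(x)\subseteq N^{++}(v)$, so $d^{++}(v)\geq d^+(x)$.

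Finally I would invoke the choice of $v$: since $d^+(v)=\delta^+$ is the minimum out-degree, $d^+(x)\geq \delta^+ = d^+(v)$. Stringing these inequalities together yields $d^{++}(v)\geq d^+(x)\geq d^+(v)$, so $v$ is a Seymour vertex. There is no real obstacle here; the only point to be careful about is ruling out $v\in N^+(x)$, which is where we genuinely need $D$ to be oriented rather than merely a digraph.
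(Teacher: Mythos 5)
Your proof is correct and follows essentially the same route as the paper's one-line argument, namely that the sink property forces $N^+(x)\setminus N^+(v)=N^+(x)$ to lie in $N^{++}(v)$, whence $d^{++}(v)\geq d^+(x)\geq\delta^+=d^+(v)$. You simply spell out the details the paper leaves implicit, including the (correct) observation that orientedness rules out $v\in N^+(x)$.
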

\begin{proof}
We have $|N^{++}(v)| \geq |N^+(x) \setminus N^+(v)| = |N^+(x)| \geq \delta^+ = |N^+(v)|$.
\end{proof}

\begin{lemma}
\label{l.2}
~\cite{daamouch2020ant}
Let $D$ be a $k$-anti-transitive oriented graph with $k \geq 2$, and let $v \in V(D)$. If there exists a directed path of length $d$ in $D[N^+(v)]$, then $d \leq k - 2$.
\end{lemma}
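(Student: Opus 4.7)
The plan is to argue by contradiction: assume a directed path of length at least $k-1$ exists in $D[N^+(v)]$, and exhibit a configuration that directly violates the definition of $k$-anti-transitivity. The key observation is that since every vertex of $N^+(v)$ is an out-neighbor of $v$, any long path inside $D[N^+(v)]$ can be prepended with the arc from $v$ to its starting vertex, producing a long directed path starting at $v$ whose endpoint is forced to be an out-neighbor of $v$.

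First, I would suppose for contradiction that there is a directed path $x_0 \to x_1 \to \cdots \to x_d$ in $D[N^+(v)]$ with $d \geq k-1$. Since $x_0 \in N^+(v)$, the arc $v \to x_0$ lies in $E(D)$, and concatenating it with the prefix $x_0 \to x_1 \to \cdots \to x_{k-1}$ yields a directed $v x_{k-1}$-path of length exactly $k$, namely
\[
v \to x_0 \to x_1 \to \cdots \to x_{k-1}.
\]
By the $k$-anti-transitivity of $D$, this forces $(v, x_{k-1}) \notin E(D)$. On the other hand, $x_{k-1} \in N^+(v)$, so $(v, x_{k-1}) \in E(D)$, a contradiction. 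Hence no path of length $k-1$ (or longer) can exist inside $D[N^+(v)]$, giving $d \leq k-2$.

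There is no real obstacle here: once one notices that $v$ can be appended at the front of any path in $D[N^+(v)]$, the proof is a one-line application of the defining property of $k$-anti-transitive digraphs. The only mild subtlety is making sure the hypothesis $k \geq 2$ is used correctly, which it is: for $k = 2$, the conclusion $d \leq 0$ says $D[N^+(v)]$ contains no arc, which is exactly the statement that no transitive triangle is rooted at $v$, consistent with the $k = 2$ case.
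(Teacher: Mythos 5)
Your proof is correct and is essentially identical to the paper's: both take a path of length at least $k-1$ in $D[N^+(v)]$, prepend the arc $v \to x_0$ to obtain a directed path of length $k$ from $v$ to $x_{k-1}$, and derive a contradiction with $x_{k-1} \in N^+(v)$ via $k$-anti-transitivity. No further comment is needed.
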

\begin{proof}
Suppose to the contrary that $d > k - 2$. Then we can find a directed path $x_0  x_1 \cdots  x_{k-1} $ of length $k - 1$ in $D[N^+(v)]$. Hence, $ v x_0 \cdots x_{k-1} $ is a directed path of length $k$, but $D$ is $k$-anti-transitive, so $v \nrightarrow x_{k-1}$, which contradicts to $x_{k-1} \in N^+(v)$.
\end{proof}

\begin{lemma}
\label{l.3}
~\cite{hassan2021seymour}
Let $v$ be a vertex of $D$ with the minimum out-degree, i.e., $d^+(v) = \delta^+$, and $R = N^+(v)$. Let $d^{++}(v) \leq \delta^+ - 1$. For distinct $r_i, r_j, r_k \in R$, we have
\[
\begin{aligned}
|N^+_{\overline{R}}(r_i, r_j)| &\geq \delta^+ + 1 - d^+_R(r_i) - d^+_R(r_j) \\
|N^+_{\overline{R}}(r_i, r_j, r_k)| &\geq \delta^+ + 2 - d^+_R(r_i) - d^+_R(r_j) - d^+_R(r_k)
\end{aligned}
\]
\end{lemma}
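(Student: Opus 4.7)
The plan is to prove both inequalities by first showing that each set $N^+_{\overline{R}}(r_\ell)$ lives inside the second out-neighborhood $N^{++}(v)$, and then exploiting the hypothesis $|N^{++}(v)| \leq \delta^+ - 1$ through iterated inclusion-exclusion. First I would note that because $D$ is oriented and $r_\ell \in N^+(v)$, we cannot have $r_\ell \to v$, so $v \notin N^+(r_\ell)$. Consequently $N^+(r_\ell)$ partitions as $N^+_R(r_\ell) \sqcup N^+_{\overline{R}}(r_\ell)$, and the minimum out-degree assumption gives $|N^+_{\overline{R}}(r_\ell)| = d^+(r_\ell) - d^+_R(r_\ell) \geq \delta^+ - d^+_R(r_\ell)$. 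Moreover, any $w \in N^+_{\overline{R}}(r_\ell)$ satisfies $v \to r_\ell \to w$, $w \neq v$, and $w \notin N^+(v)$, so $\mathrm{dist}(v,w) = 2$ and $w \in N^{++}(v)$.

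With this containment in hand, set $A = N^+_{\overline{R}}(r_i)$ and $B = N^+_{\overline{R}}(r_j)$. Since $A, B \subseteq N^{++}(v)$, we have $|A \cap B| \leq |N^{++}(v)| \leq \delta^+ - 1$. The identity $|A \cup B| = |A| + |B| - |A \cap B|$ therefore yields
\[
|A \cup B| \geq |A| + |B| - (\delta^+ - 1) \geq \delta^+ + 1 - d^+_R(r_i) - d^+_R(r_j),
\]
which is the first inequality, interpreting $N^+_{\overline{R}}(r_i, r_j)$ as the union $A \cup B$.

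For the second inequality I would iterate the same trick with $C = N^+_{\overline{R}}(r_k)$. Since $(A \cup B) \cap C \subseteq N^{++}(v)$, its cardinality is again at most $\delta^+ - 1$, hence
\[
|A \cup B \cup C| \geq |A \cup B| + |C| - (\delta^+ - 1).
\]
Plugging in the bound already obtained for $|A \cup B|$ together with $|C| \geq \delta^+ - d^+_R(r_k)$ produces
\[
|A \cup B \cup C| \geq \delta^+ + 2 - d^+_R(r_i) - d^+_R(r_j) - d^+_R(r_k),
\]
as claimed.

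The argument is essentially mechanical; the only conceptual step is recognizing the containment $N^+_{\overline{R}}(r_\ell) \subseteq N^{++}(v)$, which simultaneously uses the orientedness of $D$ (to rule out $r_\ell \to v$) and the definition of $\overline{R}$ (to force distance exactly $2$ from $v$ rather than $1$). I expect no real obstacle beyond carefully verifying that $v \notin N^+(r_\ell)$ so that the out-degree splits cleanly between $R$ and $\overline{R}$, and that all relevant intersections remain inside $N^{++}(v)$ so the bound $\delta^+ - 1$ applies at each stage of the induction on the number of vertices $r_\ell$.
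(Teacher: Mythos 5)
The paper itself states this lemma without proof (it is imported from Hassan et al.), so there is no in-house argument to compare against; judging your proposal on its own terms, it contains one genuine error: you have misread the notation. Throughout the paper, $N^+_{\overline{R}}(r_i, r_j)$ and $N^+_{\overline{R}}(r_i, r_j, r_k)$ denote the \emph{common} out-neighborhood in $\overline{R}$, i.e.\ the intersection $N^+_{\overline{R}}(r_i) \cap N^+_{\overline{R}}(r_j)$ (resp.\ the triple intersection), not the union. Every application in the main theorem confirms this: a single vertex $z_1 \in N^+_{\overline{R}}(r_1, r_m, r_{m+1})$ is subsequently used as an out-neighbor of $r_1$, of $r_m$, \emph{and} of $r_{m+1}$ simultaneously (the cycle $r_2\cdots r_m z_1 r_2$, the triangle $r_1 z_1 z_2 r_1$, and the estimate $d^{++}(r_{m+1}) \ge d^+_{\overline{X}}(z_1)$ with $X = N^+(r_{m+1})$ all require $r_\ell \to z_1$ for the respective $r_\ell$). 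Under the union reading the lemma would be useless for those arguments. Your inclusion--exclusion is therefore aimed at the wrong set: you bound $|A\cap B| \le \delta^+ - 1$ in order to lower-bound $|A \cup B|$, whereas what is needed is the reverse.

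The good news is that your key observation $N^+_{\overline{R}}(r_\ell) \subseteq N^{++}(v)$ (correctly justified via orientedness, $v \notin N^+(r_\ell)$, and $w \notin R$) is exactly the right ingredient, and the fix is a one-line reversal: since $A = N^+_{\overline{R}}(r_i)$ and $B = N^+_{\overline{R}}(r_j)$ both lie in $N^{++}(v)$, we get $|A\cup B| \le d^{++}(v) \le \delta^+ - 1$, whence
\[
|A \cap B| = |A| + |B| - |A \cup B| \ge \bigl(\delta^+ - d^+_R(r_i)\bigr) + \bigl(\delta^+ - d^+_R(r_j)\bigr) - (\delta^+ - 1) = \delta^+ + 1 - d^+_R(r_i) - d^+_R(r_j),
\]
which is the first inequality; iterating with $C = N^+_{\overline{R}}(r_k)$ and $|(A\cap B)\cup C| \le \delta^+ - 1$ gives the second. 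So the arithmetic you wrote happens to produce the correct right-hand sides, but as written it proves a different (and, for this paper, unusable) statement.
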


\begin{proposition}
\label{p.1}
~\cite{daamouch2021seymour}
Let \( D \) be an \( m \)-free digraph. If \( m \geq \delta^+ - 1 \), then \( D \) has a Seymour vertex.
\end{proposition}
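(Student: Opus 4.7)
The plan is to argue by contradiction: assume $D$ has no Seymour vertex. Theorem~\ref{t.1} (Kaneko--Locke) disposes of $\delta^+\le 6$, so I may assume $\delta^+\ge 7$, which in particular makes $m\ge 6$ and excludes $2$-cycles.

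Fix a vertex $v$ with $d^+(v)=\delta^+$ and set $R=N^+(v)$. Because $v$ is not Seymour, Lemma~\ref{l.1} forbids a sink in $D[R]$, so $D[R]$ has minimum out-degree at least $1$ and hence contains a directed cycle. The $m$-free hypothesis forces every cycle in $D[R]$ to have length at least $m+1\ge\delta^+=|R|$; the cycle must therefore be Hamiltonian, and any chord would create a strictly shorter cycle and so is forbidden. Hence $D[R]$ is exactly the directed cycle $r_0\to r_1\to\cdots\to r_{\delta^+-1}\to r_0$, so $d^+_R(r_i)=1$ and $d^+_{\overline R}(r_i)\ge\delta^+-1$ for every $i$.

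Next, since $v\notin N^+(r_i)$ (no $2$-cycles), every out-neighbor of $r_i$ in $\overline R$ lies in $N^{++}(v)$, giving $\delta^+-1\le |N^+_{\overline R}(r_i)|\le |N^{++}(v)|\le\delta^+-1$. Equality forces $N^+_{\overline R}(r_i)$ to coincide with a common set $W:=N^{++}(v)$ of size $\delta^+-1$ for every $i$, and each $r_i$ is then a minimum-out-degree vertex with $N^+(r_i)=\{r_{i+1}\}\cup W$. Re-running the same structural argument at $r_0$ (which is also not Seymour) shows that $D[N^+(r_0)]=D[\{r_1\}\cup W]$ must itself be a directed Hamilton cycle, so $d^+_{\{r_1\}\cup W}(r_1)=1$. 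But the already-determined $N^+(r_1)=\{r_2\}\cup W$ has $r_2\in R$ disjoint from $\{r_1\}\cup W\subseteq\{r_1\}\cup\overline R$, giving $d^+_{\{r_1\}\cup W}(r_1)=|W|=\delta^+-1\ge 6$, a contradiction.

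I expect the main obstacle to be the rigidity step, where the tight interplay $m+1\ge|R|$ pins $D[R]$ down to an induced chordless directed cycle; once this rigid structure is transferred to $r_0$'s out-neighborhood via the equality $N^+_{\overline R}(r_i)=W$, the clash with the already-recorded neighborhood of $r_1$ is essentially numerical. The sink-free and equality-cascade steps are by comparison routine counting.
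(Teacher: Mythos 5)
Your argument is correct. Note that the paper does not actually prove Proposition~\ref{p.1}; it is quoted from~\cite{daamouch2021seymour}, so there is no in-paper proof to measure you against, but your reasoning stands on its own: the reduction to $\delta^+\ge 7$ via Theorem~\ref{t.1}, the sink-or-cycle dichotomy from Lemma~\ref{l.1}, the observation that $m$-freeness together with $m\ge |R|-1$ forces any cycle in $D[R]$ to be a chordless Hamiltonian cycle, the equality cascade $N^+_{\overline R}(r_i)=N^{++}(v)=W$ under the assumption $d^{++}(v)\le\delta^+-1$, and the final clash between the rigid structure of $D[N^+(r_0)]$ and the already-known fact that $r_1$ dominates all of $W\subseteq N^+(r_0)$ all check out. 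This is also very much in the spirit of the rigidity arguments the paper itself runs in Case~1 of Theorem~\ref{t.main} (where $d^+_R(r_i)=1$ and the common out-neighborhood $X$ are derived the same way). Two small presentational points, neither a gap: when $m\ge\delta^+$ the sink-free assumption is already contradictory (a cycle in $D[R]$ would have length at most $|R|\le m$), so the genuinely tight case of your Hamiltonian-cycle analysis is $m=\delta^+-1$; and the final numerical clash does not really need $\delta^+-1\ge 6$, only $\delta^+-1\ge 2$, i.e.\ $|W|>1$.
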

\noindent
\textbf{Remark:} Let $D$ be an oriented simple graph. In view of Theorem~\ref{t.1}, when proving that $D$ has a Seymour vertex, it is sufficient to assume that $\delta^+ \geq 7$.\\

It is well known that SSNC implies the following special case of Caccetta-Haggkvist Conjecture. 

\begin{conjecture}
~\cite{caccetta1978}
Every digraph on $n$ vertices with minimum in-degree and minimum out-degree both at least $\frac{n}{3}$ has a directed triangle $C_3$.
\end{conjecture}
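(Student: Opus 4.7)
The plan is to derive the statement from Seymour's Second Neighborhood Conjecture (SSNC), along the lines of the well-known implication alluded to in the sentence immediately preceding the conjecture. I would argue by contradiction: suppose $D$ is an oriented graph on $n$ vertices satisfying $\delta^+(D) \geq n/3$ and $\delta^-(D) \geq n/3$, and assume $D$ contains no directed triangle $C_3$. Applying SSNC to $D$ yields a Seymour vertex $v$, at which the degree hypotheses give $d^+(v) \geq n/3$, $d^-(v) \geq n/3$, and $d^{++}(v) \geq d^+(v) \geq n/3$.

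The core of the argument is the claim that the four sets $\{v\}$, $N^+(v)$, $N^-(v)$, and $N^{++}(v)$ are pairwise disjoint. Three of the six pairwise intersections are empty for formal reasons: $v$ itself lies in none of the three neighborhoods because $D$ is loopless and, being an oriented graph, has no $2$-cycles; $N^+(v) \cap N^-(v) = \emptyset$ for the same no-$2$-cycle reason; and $N^+(v) \cap N^{++}(v) = \emptyset$ because the definition of $N^{++}(v)$ records distance exactly $2$, excluding distance $1$. The decisive disjointness is $N^-(v) \cap N^{++}(v) = \emptyset$: any $u$ lying in both would supply an arc $u \to v$ together with a $2$-path $v \to w \to u$, producing the directed triangle $v \to w \to u \to v$ and contradicting $C_3$-freeness.

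Summing cardinalities then yields
\[
n \;=\; |V(D)| \;\geq\; 1 + d^+(v) + d^-(v) + d^{++}(v) \;\geq\; 1 + 3\cdot\tfrac{n}{3} \;=\; n+1,
\]
a contradiction, so $D$ must contain a $C_3$.

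The principal obstacle is that SSNC itself is still open, so this plan establishes only the implication SSNC $\Rightarrow$ (the stated form of Caccetta--Haggkvist), not the conjecture unconditionally. This is precisely why the paper's strategy in the sections that follow is to sidestep the obstacle by establishing SSNC on structured subclasses (specifically $k$-anti-transitive and $(k-4)$-free digraphs), which then yields the conclusion above unconditionally on those subclasses, in particular on the $7$-anti-transitive oriented graphs highlighted in the abstract.
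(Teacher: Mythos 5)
The statement you were asked about is an open conjecture (the triangle case of Caccetta--Haggkvist), and the paper accordingly offers no proof of it: it simply cites the source and records the relevant implication as Proposition 2.2 (``a Seymour vertex plus the degree hypotheses forces a directed triangle''), which is itself stated without proof. Your proposal handles this situation correctly: you do not claim to settle the conjecture, and instead you supply a complete and correct proof of exactly the implication the paper leaves implicit. Your disjointness argument is sound --- $v \notin N^+(v) \cup N^-(v) \cup N^{++}(v)$ and $N^+(v) \cap N^-(v) = \emptyset$ by loop- and digon-freeness, $N^+(v) \cap N^{++}(v) = \emptyset$ by the exact-distance definition, and $N^-(v) \cap N^{++}(v) = \emptyset$ as the one place triangle-freeness enters --- and the count $n \geq 1 + d^+(v) + d^-(v) + d^{++}(v) \geq n+1$ closes it. Your closing paragraph also matches the paper's actual strategy: Theorem 3.1 establishes SSNC on $k$-anti-transitive, $(k-4)$-free digraphs, and the Corollary then applies the implication to get the triangle conclusion unconditionally for $7$-anti-transitive oriented graphs. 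One small point worth flagging: the conjecture as printed says ``every digraph,'' which in principle permits $2$-cycles, whereas your argument (like the paper's Proposition 2.2, which is explicitly restricted to oriented graphs) uses digon-freeness in two of the disjointness claims; since SSNC itself concerns oriented graphs, your conditional statement is pitched at the right generality, but it does not address digraphs containing digons.
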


More precisely, we have the following.

\begin{proposition}
\label{p.2}
Let $D$ be an oriented graph on $n$ vertices such that its minimum in-degree and its minimum out-degree are both at least $\frac{n}{3}$. If $D$ has a Seymour vertex, then it has a directed triangle.
\end{proposition}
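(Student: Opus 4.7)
The plan is a short proof by contradiction. Suppose $D$ contains no directed triangle, and let $v$ be the promised Seymour vertex, so that $d^{++}(v) \geq d^+(v)$. I would verify that the four sets $\{v\}$, $N^+(v)$, $N^-(v)$, and $N^{++}(v)$ are pairwise disjoint subsets of $V(D)$, and then sum their cardinalities against $|V(D)| = n$ to reach a contradiction.

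Most of the disjointness is automatic. The vertex $v$ is excluded from $N^+(v)$ and $N^-(v)$ because $D$ has no loops, and from $N^{++}(v)$ by the definition of the second out-neighborhood (vertices at distance exactly $2$ from $v$). The intersections $N^+(v) \cap N^{++}(v)$ and $N^+(v) \cap N^-(v)$ are empty by the definition of distance and because $D$ is oriented (no digons), respectively. The one step where triangle-freeness genuinely enters is $N^-(v) \cap N^{++}(v) = \emptyset$: if some $w$ lay in this intersection, then a witness $x \in N^+(v)$ with $x \to w$, combined with $w \to v$, would produce the directed triangle $v \to x \to w \to v$, contradicting our standing assumption.

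With disjointness in hand, the counting is immediate:
\[
n \;\geq\; 1 + d^+(v) + d^-(v) + d^{++}(v) \;\geq\; 1 + \tfrac{n}{3} + \tfrac{n}{3} + \tfrac{n}{3} \;=\; n+1,
\]
where the second inequality combines the minimum in-degree and out-degree hypotheses on $v$ with the Seymour property $d^{++}(v) \geq d^+(v) \geq n/3$. This contradicts $|V(D)| = n$, completing the argument.

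I do not anticipate any real obstacle here: the only place where the triangle-free assumption is used is the single observation that $N^-(v)$ and $N^{++}(v)$ cannot meet, and everything else is bookkeeping. The proposition is essentially a pigeonhole applied to the partition of $V(D)$ around a Seymour vertex, and both hypotheses (the $n/3$ degree bound and the existence of a Seymour vertex) are used precisely once.
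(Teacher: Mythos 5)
Your proof is correct and is exactly the standard counting argument that the paper leaves implicit (it states the proposition as a well-known consequence of SSNC and gives no proof). The disjointness of $\{v\}$, $N^+(v)$, $N^-(v)$, $N^{++}(v)$ under triangle-freeness, followed by $n \geq 1 + 3\cdot\frac{n}{3}$, is the intended reasoning, and each step checks out against the paper's definitions.
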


As SSNC holds on $k$-anti-transitive digraphs for $2 \leq k \leq 6$, the special case of Caccetta-Haggkvist conjecture also holds on $k$-anti-transitive digraphs for $2 \leq k \leq 6$.
\section{Main result}

\begin{theorem}
\label{t.main} 
Let $D$ be an oriented graph. If \( D \) is both \( k \)-anti-transitive and $(k-4)$-free, then \( D \) has a Seymour vertex.
\end{theorem}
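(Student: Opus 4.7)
We argue by contradiction. By Theorem~\ref{t.1} we may assume $\delta^+ \geq 7$, and Proposition~\ref{p.1} applied to the $(k-4)$-free hypothesis lets us assume $k - 4 < \delta^+ - 1$, equivalently $k \leq \delta^+ + 2$. Since SSNC is already known for $k$-anti-transitive digraphs with $k \leq 6$, we may also take $k \geq 7$. Pick $v$ with $d^+(v) = \delta^+$ and set $R = N^+(v)$; assume $d^{++}(v) \leq \delta^+ - 1$, so by Lemma~\ref{l.1} every $x \in R$ satisfies $d^+_R(x) \geq 1$.

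Take a longest directed path $x_0 x_1 \cdots x_p$ in $D[R]$. Lemma~\ref{l.2} yields $p \leq k-2$, and maximality together with the no-sink property forces an out-neighbor of $x_p$ in $R$ to lie on the path, producing a directed cycle $C$ in $D[R]$ of length $c \leq p+1 \leq k-1$. The $(k-4)$-free hypothesis gives $c \geq k-3$, hence $c \in \{k-3,\,k-2,\,k-1\}$. Relabel so that $C : x_0 \to x_1 \to \cdots \to x_{c-1} \to x_0$; in particular $v \to x_j$ for every $j$. The driving mechanism is this: $k$-anti-transitivity applied to each arc $v \to x_j$ forbids any simple directed $vx_j$-path of length exactly $k$, while travelling once around $C$ starting at $v$ already produces a $vx_j$-path of length $c \in \{k-3, k-2, k-1\}$. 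Consequently, every way to extend or re-route this length-$c$ path by $k-c \in \{1, 2, 3\}$ additional distinct vertices (drawn from $R \setminus V(C)$, or via out-neighbors of the $x_j$'s in $\overline{R}$ that return to $V(C)$) must fail, and each such failure records itself as a forbidden arc.

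Converting these forbidden arcs into structure: one shows, in the case $c = k-1$, that no $y \in R \setminus V(C)$ has an out-neighbor in $V(C)$. Combining this with Lemma~\ref{l.1}, Lemma~\ref{l.2}, and the $(k-4)$-free hypothesis applied to $D[R \setminus V(C)]$ forces either $R = V(C)$ or the existence of a second directed cycle $C'$ in $D[R]$ disjoint from $C$; analogous (but more elaborate) dichotomies appear for $c \in \{k-2, k-3\}$. Given this skeleton, a careful application of Lemma~\ref{l.3} to three suitably chosen cycle vertices $r_i, r_j, r_k$---using the bounds on the $d^+_R(r_i)$ furnished by the forbidden-arc analysis---will force the common second out-neighborhood $|N^+_{\overline{R}}(r_i, r_j, r_k)|$ to exceed $|N^{++}(v)| \leq \delta^+ - 1$, contradicting $N^+_{\overline{R}}(r_i, r_j, r_k) \subseteq N^{++}(v)$.

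The case $c = k-1$ should proceed largely along the lines of the $(k-3)$-free argument in \cite{daamouch2021seymour}, with only minor modifications. The cases $c = k-2$ and especially $c = k-3$ are the new content, and I expect $c = k-3$ to be the main technical obstacle: since $k - c = 3$, three auxiliary vertices must be produced and threaded into a simple $vx_j$-path of length exactly $k$, simultaneously avoiding the creation of short cycles forbidden by the $(k-4)$-free hypothesis. The simultaneous combinatorial book-keeping for simple-path constraints and for forbidden short cycles in this sub-case is where the heart of the proof will live.
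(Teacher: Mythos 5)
Your setup matches the paper's: reduce to $\delta^+\ge 7$ via Theorem~\ref{t.1}, to $m=k-4<\delta^+-1$ via Proposition~\ref{p.1}, assume $d^{++}(v)\le\delta^+-1$ and no sink in $D[R]$, and split on the length of a longest path/cycle in $D[R]$ (your $c\in\{k-3,k-2,k-1\}$ corresponds to the paper's $\ell\in\{m,m+1,m+2\}$). But from that point on the proposal is an outline, not a proof: the phrases ``one shows,'' ``analogous (but more elaborate) dichotomies appear,'' ``a careful application of Lemma~\ref{l.3} \dots will force,'' and ``I expect \dots to be the main technical obstacle'' defer exactly the content that constitutes the paper's proof --- three separate claims, each with several sub-cases, in which specific forbidden arcs are extracted from $C(k,1)$-freeness and short-cycle-freeness and then assembled. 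Nothing in the proposal certifies that this program closes.

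More seriously, the endgame you propose cannot work as stated. You aim to make $|N^+_{\overline{R}}(r_i,r_j,r_k)|$ exceed $\delta^+-1$ via Lemma~\ref{l.3}, but that lemma's bound is $\delta^++2-d^+_R(r_i)-d^+_R(r_j)-d^+_R(r_k)$, and since every vertex of $R$ has $d^+_R\ge 1$ (the no-sink assumption you yourself invoke), this bound is at most $\delta^+-1$; moreover the lemma is \emph{derived from} the hypothesis $d^{++}(v)\le\delta^+-1$, so it can never contradict it. The paper's actual mechanism is different: Lemma~\ref{l.3} is used only to guarantee the \emph{existence} of one vertex $z_1$ in a common out-neighborhood of two or three carefully chosen path vertices; $z_1$ then seeds a short chase $z_1\to z_2\to z_3$ inside $X=N^+(r)$ for a specific path vertex $r$ (namely $r_m$, $r_{m+1}$, $r_{m+2}$, or $r_2$ depending on the case), where $k$-anti-transitivity forces the chase to terminate at a sink of $D[X]$, whence $r$ --- not $v$ --- is the Seymour vertex by Lemma~\ref{l.1} (sometimes supplemented by one extra vertex such as $r_2$ or $r_3$ in $N^{++}(r)$). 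You would need to replace your proposed contradiction with an argument of this kind, and then actually carry out the case analysis.
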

\begin{proof}
Notice that the validity of this theorem has been confirmed for $k=6$, denoting $6$-anti-transitivity and $2$-freeness~\cite{hassan2021seymour}. Subsequently, our focus shifts to cases with $k \geq 7$, ensuring that the digraph is at least $3$-free. Hence $m \geq 3$. In addition, in view of Theorem~\ref{t.1}, it is sufficient to assume that $\delta^+ \geq 7$.\\
Let \( v \in V(D) \) such that \( d^+(v) = \delta^+ \). Set \( R = N^+(v) = \{r_0, r_1, \ldots, r_{\delta^+-1}\} \) and let $m=k-4$.
If $m \geq \delta^+ - 1$, then by Proposition~\ref{p.1}, \( D \) has a Seymour vertex. Due to this, we can presume that \( m < \delta^+ - 1 \).
If there exists$r \in R$ such that $d^+_R(r)=0$ (this vertex is referred to as a sink vertex), then by Lemma~\ref{l.1}, $v$ is a Seymour vertex in $D$. Presume now that we have $ d^+_R(r) \geq 1 $ for all \( r \in R \). Let \( l \) be the length of a longest directed path in \( D[R] \). Given that \( D \) is an \( m \)-free digraph, we have \( D[R] \) is also an \( m \)-free digraph, and as \( d^+_R(r) \geq 1 \) for all \( r \in R \), it follows that \( D[R] \) has a directed path of length at least \( m \). On the other hand, Lemma~\ref{l.2} implies that \( l \leq k - 2 = (m+4) - 2 = m +2\). So we have three cases: \( l = m \), \( l = m + 1 \) and \(l=m+2\). \\Observe that we may assume that $d^{++}(v) \leq \delta^+ - 1$, since otherwise $v$ is a Seymour vertex.
\begin{case}
$\ell = m$.
\end{case}
Let $r_0r_1\cdots r_m$ be a longest path in $D[R]$ of length $m$. Set \(B =\{r_i\in R \colon i \in \{0,\ldots,m\}\}\). As $r_0r_1\cdots r_m$ is a longest path, we must have $N^+_R(r_m) \subseteq B$. If $r_m \rightarrow r_i$ for some $i\neq 0$, then $r_m r_i \cdots r_m$ would be a directed cycle of length at most $m$, which contradicts the fact that \(D\) is an \(m\)-free digraph. It follows that $N^+_R(r_m) = \{r_0\}$ as $d^+_R(r_m)\geq 1$, and hence $d^+_R(r_m) = 1$. Similarly, since $D$ is an $m$-free digraph and $\ell=m$, we must have $N^+_R(r_i) = \{r_{i+1}\}$ and for all $ i \in \{0,\ldots,m-1\}$. Therefore $d^+_R(r_i) = 1$ for all $ i \in \{0,\ldots,m\}$.
If $d^+(r_i) \geq \delta^+ + 1$ for some $ i \in \{0,\ldots,m\}$, then $d^{++}(v) \geq |N^+_{\overline{R}}(r_i)| \geq d^+(r_i) - d^+_R(r_i) \geq \delta^+ + 1 - 1 = \delta^+$, which contradicts our assumption that $d^{++}(v) \leq \delta^+ - 1$. Thus $d^+(r_i)=\delta^+$ for all $i \in \{0,\ldots,m\}$. It follows that $d^+_{\overline{R}}(r_i) = \delta^+ -1$ for all $i \in \{0,\ldots,m\}$. We can easily see that $N^+_{\overline{R}}(r_i) = N^+_{\overline{R}}(r_j)$ for all $i, j \in \{0,\ldots,m\}$. Otherwise, there exists a vertex $z \in \overline{R}$ such that $ z \in N^+_{\overline{R}}(r_i) \setminus N^+_{\overline{R}}(r_j)$. Thus, $|N^{++}(v)| \geq |N^+_{\overline{R}}(r_j)| + |\{z\}| \geq \delta^+ -1 +1 = \delta^+$, which contradicts the fact that $d^{++}(v) \leq \delta^+ - 1$. Now let $X = N^+_{\overline{R}}(r_m) = N^+_{\overline{R}}(r_0) $. We will show that there exists a vertex $z \in X$ such that $d^+_X(z) =0$, and $r_m$ will be a Seymour vertex. On the contrary, assume that for each vertex $z \in X$, we have $d^+_X(z) \geq 1$. Hence we can find $z_1,z_2,$ and $z_3 \in X$ such that $z_1 \rightarrow z_2 \rightarrow z_3$. As $m \geq 3$, we have $z_3 \nrightarrow z_1$. Furthermore $z_3 \nrightarrow z_4$ for any $z_4 \in X$. Because otherwise $r_0 r_1 \cdots r_m z_1 z_2 z_3 z_4$ would be a directed path of length \(k=m+4\), giving \(z_4 \notin N^+(r_0) \), which is a contradiction as $z_4 \in N^+(r_0)$. Thus, there exists $z \in X$ such that $d^+_X(z) = 0$, and hence $d^+_{\overline{X}}(z) \geq \delta^+$. Therefore $d^{++}(r_m) \geq d^+_{\overline{X}}(z) \geq \delta^+ = d^+(r_m)$, and $r_m$ is a Seymour vertex.

\begin{case}
$\ell = m+1$.
\end{case}
Let $ r_0 r_1 \cdots r_{m+1}$ be a longest directed path in $D[R]$ of length $m+1$, and consider \( B = \{ r_i \in R \colon i \in \{0,\ldots,m+1\} \} \). It is clear that $N^+_R(r_{m+1}) \subseteq \{r_0, r_1\}$. In fact, suppose $r_{m+1} \rightarrow r_i$ for some $r_i \in B \setminus \{r_0,r_1\}$. Hence $r_{m+1} r_i \cdots r_{m+1}$ would be a directed cycle of length at most \(m\), which contradicts the fact that \(D\) is an \(m\)-free digraph. Clearly, $r_{m+1} \nrightarrow r$ for all $r \in R \setminus B $, otherwise $r_0 r_1 \cdots r_{m+1} r $ would be a directed path of length $m+2$, which is a contradiction. We examine two cases: $r_{m+1} \rightarrow r_0$ and $r_{m+1} \nrightarrow r_0$.

\begin{subcase}
$r_{m+1} \rightarrow r_0$.
\end{subcase}
In this case $r_0 r_1 \cdots r_{m+1}r_0$ is a directed cycle of length $m+2$. Note that for all $i \in \{0,\ldots,m\}$, we have $r_i \nrightarrow r$ for all $r \in R \setminus B$, since otherwise $r_{i+1} \cdots r_{m+1}r_0 \cdots r_i r $ would be a directed path of length $m+2$, which is a contradiction. In the other hand, since $D$ is an $m$-free digraph, we must have $N^+_R(r_i)\subseteq \{r_{i+1},r_{i+2}\}$ for all $r_i \in B$ where the subscripts are taken modulo $m+2$. Hence, we have $d^+_R(r_i) \leq 2$ for all $r_i \in B$. Observe that there must exist a vertex $r_i \in B$ such that $d^+_B(r_i) = 1$. Otherwise, $N^+_B(r_i)=\{r_{i+1},r_{i+2}\}$ for all $r_i \in B$, and hence $ r_{m+1} r_0 r_2 r_4 r_6 r_7 \cdots r_m r_{m+1} $ would be a directed cycle of length at most \(m\), which contradicts the fact that \(D\) is an \(m\)-free digraph. We can presume that $d^+_B(r_{m+1}) = 1$. It follows that $N^+_R(r_{m+1}) = \{r_0\}$ and $d^+_R(r_{m+1}) = 1$. Furthermore, we must have $d^+(r_{m+1}) = \delta^+$, for otherwise we get $d^{++}(v) \geq \delta^+$, which is a contradiction. Now, Lemma~\ref{l.3} implies that $|N^+_{\overline{R}}(r_1,r_m, r_{m+1})| \geq \delta^+ + 2 - d^+_R(r_1) - d^+_R(r_m) - d^+_R(r_{m+1}) \geq \delta^+ +2-2-2-1 \geq \delta^+ -3 \geq 4$. So we can find a vertex $z_1$ such that $z_1 \in N^+_{\overline{R}}(r_1, r_m, r_{m+1})$. Let $X = N^+(r_{m+1})$.\\
If $d^+_X(z_1) \leq 1$, then $d^+_{\overline{X}}(z_1) \geq \delta^+ -1$. Observe that $r_1 \in N^{++}(r_{m+1})$ and $r_1 \notin N^+_{\overline{X}}(z_1)$. It follows that $d^{++}(r_{m+1}) \geq d^+_{\overline{X}}(z_1) + |\{r_1\}| \geq \delta^+ -1 +1 = \delta^+ = d^+(r_{m+1})$, and hence $r_{m+1}$ is a Seymour vertex.\\
If $d^+_X(z_1) \geq 2$, then there must exist a vertex $z_2 \in N^+_{\overline{R}}(r_{m+1})$ such that $z_1 \rightarrow z_2$. Note that $z_2 \nrightarrow z_3$ for any $z_3 \in N^+_{\overline{R}}(r_{m+1})$, since otherwise $r_{m+1} r_0 r_1 \cdots r_m z_1 z_2 z_3 $ would be a directed path of length \(k=m+4\), such that $r_{m+1} \rightarrow z_3$ which is a contradiction. Additionally, $z_2 \nrightarrow r_0$ since otherwise $ v r_{m+1} z_1 z_2 r_0 \cdots r_m$ would be a directed path of length \(k=m+4\), giving \(r_m \notin N^+(v) \) , which is a contradiction.
Hence, $d^+_X(z_2) = 0 $ and $d^+_{\overline{X}}(z_2) \geq \delta^+$. 
Therefore $d^{++}(r_{m+1}) \geq d^+_{\overline{X}}(z_2) \geq \delta^+ = d^+(r_{m+1})$, and $r_{m+1}$ is a Seymour vertex.

\begin{subcase}
$r_{m+1} \nrightarrow r_0$.
\end{subcase}
In this case we have $N^+_R(r_{m+1})=\{r_1\}$ and $d^+_R(r_{m+1}) = 1$, since otherwise $d^+_R(r_{m+1})=0$, which is a contradiction. In addition, we may assume that $d^+(r_{m+1}) = \delta^+$, since otherwise $d^+(r_{m+1}) \geq \delta^+ + 1$ and $d^{++}(v) \geq \delta^+$, which is a contradiction. 

\begin{claim}
\label{cl.1}
$N^+_{R \setminus B }(r_m) = \emptyset$, unless $r_{m+1}$ is a Seymour vertex.
\begin{proof}[Proof of Claim~\ref{cl.1}]
Assume that there exists a vertex $r_{m+2} \in N^+_{R\setminus B}(r_m) $. Clearly, $r_{m+2} \nrightarrow r_0$, as otherwise $r_{m+2} r_0 r_1 \cdots r_{m+1}$ would be a directed path of length $m+2$ which is a contradiction. Using the same previous verification, we get that  $N^+_R(r_{m+2}) = \{r_1\}$. Let $z_1 \in N^+_{\overline{R}}(r_m,r_{m+1}, r_{m+2})$, such a vertex exists since Lemma~\ref{l.3} implies that $|N^+_{\overline{R}}(r_m,r_{m+1}, r_{m+2})| \geq \delta^+ + 2 - (\delta^+ -2)- 1 - 1 = 2$. We have $z_1 \nrightarrow r_2$ since otherwise $ r_2 \cdots r_m z_1 r_2$ would be a directed cycle of length at most \(m\), which contradicts the fact that \(D\) is an \(m\)-free digraph. Set $X = N^+(r_{m+1})$.\\
If $d^+_X(z_1) \leq 1$, then $d^+_{\overline{X}}(z_1) \geq \delta^+ -1$. Thus, $d^{++}(r_{m+1}) \geq d^+_{\overline{X}}(z_1) + |\{r_2\}| \geq \delta^+ -1 +1 = \delta^+ = d^+(r_{m+1})$ and $r_{m+1}$ is a Seymour vertex.\\
If $d^+_X(z_1) \geq 2 $, then $z_1 \rightarrow z_2$ for some $z_2 \in N^+_{\overline{R}}(r_{m+1})$. Note that $z_2 \nrightarrow z_3$ for any $z_3 \in N^+_{\overline{R}}(r_{m+1})$, for otherwise $r_{m+1} r_1 \cdots r_m r_{m+2} z_1 z_2 z_3$ is a directed path of length \(k=m+4\) such that \(r_{m+1} \rightarrow z_3\), which is a contradiction. Additionally, $z_2 \nrightarrow r_1$ since otherwise $v r_{m+2} z_1 z_2 r_1 \cdots r_{m+1}$ is a directed path of length $m+4=k$ such that $v \rightarrow r_{m+1}$, which is a contradiction. Thus, $d^+_X(z_2)=0$ and $d^+_{\overline{X}}(z_2) \geq \delta^+$. Therefore, $d^{++}(r_{m+1}) \geq d^+_{\overline{X}}(z_2) \geq \delta^+ = d^+(r_{m+1})$, and $r_{m+1}$ is a Seymour vertex.\\
Finally, we proved that $r_{m+1}$ is Seymour vertex unless $r_m \nrightarrow r$ for all $r\in~{R \setminus B}$.
\end{proof}
\end{claim}

By Claim~\ref{cl.1}, we may assume that $N^+_{R \setminus B }(r_m) = \emptyset$; this means that $N^+_R(r_m)\subseteq \{r_0,r_{m+1}\}$ and $d^+_R(r_m) \leq 2$. Now, observe that $d^+_R(r_1) \leq \delta^+ - 3$ as $r_0,r_{m+1} \notin N^+_R(r_1)$.
Hence, by applying Lemma~\ref{l.3}, we have $|N^+_{\overline{R}}(r_1,r_m, r_{m+1})|\geq \delta^+ + 2 - (\delta^+ - 3) - 2 - 1 = 2$. Let $z_1 \in N^+_{\overline{R}}(r_1,r_m, r_{m+1})$. Set $X = N^+(r_{m+1})$. It is clear that $r_2 \in N^{++}(r_{m+1})$. Note that $z_1 \nrightarrow r_2$ since otherwise $ r_2 \cdots r_m z_1 r_2$ would be a directed cycle of length at most \(m\), which contradicts the fact that \(D\) is an \(m\)-free digraph.\\
If $d^+_X(z_1) \leq 1$, then $d^+_{\overline{X}}(z_1) \geq \delta^+ -1$. Therefore, $d^{++}(r_{m+1}) \geq d^+_{\overline{X}}(z_1) + |\{r_2\}| \geq \delta^+ = d^+(r_{m+1})$, and $r_{m+1}$ is a Seymour vertex.\\
If $d^+_X(z_1) \geq 2 $, then there exists $z_2 \in  N^+_{\overline{R}}(r_{m+1})$ such that $z_1 \rightarrow z_2$. Clearly, if $d^+_X(z_2)=0$ then $r_{m+1}$ is a Seymour vertex by Lemma~\ref{l.1}. So, we may assume that $d^+_X(z_2)\geq 1$. Note that $z_2 \nrightarrow r_1$, for otherwise $r_1z_1z_2r_1$ is a $C_3$, which is a contradiction. Hence there exists $z_3 \in  N^+_{\overline{R}}(r_{m+1})$ such that $z_2 \rightarrow z_3$. Note that $z_3 \nrightarrow z_4$ for any $z_4 \in N^+_{\overline{R}}(r_{m+1})$, for otherwise $r_{m+1} r_1 \cdots r_m z_1 z_2 z_3 z_4$ would be a directed path of length \(k=m+4\), giving \(z_4 \notin N^+(r_{m+1})\), which is a contradiction. Additionally, $z_3 \nrightarrow r_1$, otherwise $v r_2 \cdots r_{m+1} z_1 z_2 z_3 r_1$ would extend to a $C(k,1)$, which is a contradiction.
Therefore, $d^+_X(z_3) = 0$ and $d^+_{\overline{X}}(z_3) \geq \delta^+ $.
Hence $r_{m+1}$ is a Seymour vertex as $d^{++}(r_{m+1}) \geq d^+_{\overline{X}}(z_3)  \geq \delta^+ = d^+(r_{m+1})$.

\begin{case}
$\ell = m+2$.
\end{case}
Let $ r_0 r_1 \cdots r_{m+2}$ be a longest directed path in $D[R]$ of length $m+2$. Set \( B = \{ r_i \in R \colon i \in \{0,\ldots,m+2\} \} \).
In this case we must have $N^+_R(r_{m+2}) \subseteq \{r_0, r_1, r_2\}$. We examine two cases: $r_{m+2} \rightarrow r_0$ and $r_{m+2} \nrightarrow r_0$.

\begin{subcase}
\( r_{m+2} \to r_0 \).
\end{subcase}
Note that for all $i \in \{0,\ldots,m+1\}$, we have $r_i \nrightarrow r$ for all $r \in R \setminus B$, since otherwise $r_{i+1} \cdots r_{m+2} r_0 \cdots r_i r $ would be a directed path of length $m+3$, which is a contradiction. There must exist a vertex $r_i \in B$ such that \( d^+_B(r_i) \leq~2\), because otherwise \( d^+_B(r_i) = 3 \) and $ N^+_B(r_i)=\{r_{i+1},r_{i+2},r_{i+3}\}$ for all $ i \in \{0,\ldots,m+2\}$ $\mod(m+2)$, hence $ r_{m+2} r_0 r_3 r_6 \cdots r_{m+2} $ would be a directed cycle of length at most \(m\), which contradicts the fact that \(D\) is an \(m\)-free digraph. We may assume that \( d^+_R(r_{m+2}) \leq 2 \); this means that either $r_{m+2}\rightarrow r_1$ or $r_{m+2}\rightarrow r_2$. If \( d^+(r_{m+2}) \geq \delta^+ + 2 \), then $d^{++}(v)\geq \delta$, which is a contradiction. Thus, \( d^+(r_{m+2}) \leq \delta^+ + 1 \). Let $X = N^+(r_{m+2})$. There exists \( z_1 \in N^+_{\overline{R}}(r_1, r_{m+1}, r_{m+2}) \), since \( |N^+_{\overline{R}}(r_1, r_{m+1}, r_{m+2})| \geq \delta^+ + 2 - 3 - 3 - 2 \geq 1 \), by Lemma~\ref{l.3}.
We have \( z_1 \nrightarrow z_2 \) for any \( z_2 \in N^+_{\overline{R}}(r_{m+2}) \), Since otherwise  \(  r_{m+2} r_0 r_1 \cdots r_{m+1} z_1 z_2 \) is a directed path of length \(k=m+4\), giving \(z_2 \notin N^+(r_{m+2})\), which is a contradiction. Additionally \( z_1 \nrightarrow r_0\) and \(z_1 \nrightarrow r_2\), as otherwise \( v r_{m+2} z_1 r_0 r_1 \cdots r_{m+1}\) and \(v r_{m+2} r_0 r_1 z_1 r_2 \cdots r_{m+1} \)  are directed paths of length \(k=m+4\), giving \(r_{m+1} \notin N^+(v) \) which is a contradiction. Notice that $z_1 \nrightarrow r_1$. Therefore, $d^+_X(z_1)=0$ and $d^+_{\overline{X}}(z_1) \geq \delta^+$. Observe that $\{r_1,r_2\}\cap N^{++}(r_{m+2})\neq \emptyset$. If $r_1 \in N^{++}(r_{m+2})$, then $d^{++}(r_{m+2}) \geq d^+_{\overline{X}}(z_1) + |\{r_1\}| \geq \delta^+ +1 = d^+(r_{m+2})$, and $r_{m+2}$ is a Seymour vertex. Finally, if $r_2 \in N^{++}(r_{m+2})$, then $d^{++}(r_{m+2}) \geq d^+_{\overline{X}}(z_1) + |\{r_2\}| \geq \delta^+ +1 = d^+(r_{m+2})$, and $r_{m+2}$ is a Seymour vertex.

\begin{subcase}
\( r_{m+2} \nrightarrow r_0 \).
\end{subcase}
In this case we have $N^+_R(r_{m+2}) \subseteq \{r_1, r_2\}$, and hence $d^+_R(r_{m+2}) \leq 2$.\\
In order to find $z \in N^+_{\overline{R}}(r_2,r_{m+1},r_{m+2})$, we need upper bounds for $d^+_R(r_2)$ and $d^+_R(r_{m+1})$ to ensure that $|N^+_{\overline{R}}(r_2,r_{m+1},r_{m+2})|~\neq~0$. We establish this by proving that if \( r_2 \) or \( r_{m+1} \) have an out neighbor in \( R \setminus B \), then \(r_{m+2} \) is a Seymour vertex.

\begin{claim}
\label{cl.2}
$N^+_{R \setminus B }(r_{m+1}) = \emptyset$, unless $r_{m+2}$ is a Seymour vertex.
\begin{proof}[Proof of Claim~\ref{cl.2}]
Assume that there exists a vertex $r_{m+3}\in N_{R\setminus B}^+(r_{m+1})$. Clearly, we have $r_{m+3}\nrightarrow r_0$ as otherwise $ v r_{m+3} r_0 \cdots r_{m+2} $ is a directed path of length $k=m+4$, giving $r_{m+2} \notin R$, which is a contradiction.
As $\ell=m+2$ and $D$ is an $m$-free digraph, we get $N^+_R(r_{m+3}) \subseteq \{r_1, r_2\}$. Similarly, for all $r\in N_{R\setminus B}^+(r_{m+1})$, we have $N^+_R(r) \subseteq \{r_1, r_2\}$.

Suppose first that there exists a vertex $r \in N^+_R(r_{m+1}) \setminus \{r_0\}$ such that $r \rightarrow r_1$. Without loss of generality, assume $r_{m+2} \rightarrow r_1$.
We have \( d^+(r_{m+2}) \leq \delta^+ + 1 \), for otherwise we get $d^{++}(v)\geq \delta^+$, which is a contradiction.
Let $X = N^+(r_{m+2})$.
Let $z_1 \in N^+_{\overline{R}}(r_{m+2}, r_{m+3})$, such a vertex exists since Lemma~\ref{l.3} implies that $|N^+_{\overline{R}}(r_{m+2}, r_{m+3})| \geq \delta^+ + 1 - 2 - 2 \geq 4$.
Note that, $r_2 \in N^{++}(r_{m+2})$ if $r_{m+2} \nrightarrow r_2$, and $r_3 \in N^{++}(r_{m+2})$ otherwise.
Observe that $z_1 \nrightarrow r_1$, $z_1 \nrightarrow r_2$, $z_1 \nrightarrow r_3$, and $z_1 \nrightarrow z_2$ for any $z_2 \in N^+_{\overline{R}}(r_{m+2})$,
since otherwise $ v r_{m+3} z_1 r_1 \cdots r_{m+2} $,  $ v r_{m+3} z_1 r_2 \cdots r_{m+2} r_1 $, 
$ v r_{m+3} z_1 r_3 \cdots r_{m+2} r_1 r_2 $, and $ r_{m+2} r_1 r_2\cdots r_{m+1} r_{m+3} z_1 z_2 $ would extend to a $C(k,1)$ respectively, which is a contradiction.
Thus, $d^+_X(z_1) = 0$.
Therefore, if $r_{m+2} \nrightarrow r_2$ then $d^{++}(r_{m+2}) \geq d^+_{\overline{X}}(z_1) + |\{r_2\}| \geq \delta^+ +1 \geq d^+(r_{m+2})$, and $r_{m+2}$ is a Seymour vertex.
Otherwise, $d^{++}(r_{m+2}) \geq d^+_{\overline{X}}(z_1) + |\{r_3\}| \geq \delta^+ +1 \geq d^+(r_{m+2})$, and $r_{m+2}$ is a Seymour vertex.

Suppose now that for all $r \in N^+_R(r_{m+1})\setminus \{r_0\}$, we have $r \nrightarrow r_1$.
If there exists $r \in N^+_R(r_{m+1}) \setminus \{r_0\}$ such that $r \nrightarrow r_2$, then $d^+_R(r)=0$, which is a contradiction. Thus for all $r \in N^+_R(r_{m+1}) \setminus \{r_0\}$, we must have $r \rightarrow~r_2$, and hence $N^+_R(r)=\{r_2\}$. If \( d^+(r_{m+2}) \geq \delta^+ + 1 \), then $d^{++}(v)\geq \delta^+$, which is a contradiction. Thus \( d^+(r_{m+2}) = \delta^+ \). Let $X = N^+(r_{m+2})$. Since $d^+_R(r_{m+1}) \leq \delta^+ - 2$, Lemma~\ref{l.3} implies that $|N^+_{\overline{R}}(r_{m+1}, r_{m+2}, r_{m+3})| \geq \delta^+ + 2 - (\delta^+ - 2) - 1 - 1 = 2$. Let $z_1 \in N^+_{\overline{R}}(r_{m+1}, r_{m+2}, r_{m+3})$.
Notice that $ z_1 \nrightarrow r_3$, for otherwise $ r_3 \cdots r_{m+1} z_1 r_3$ would be a directed cycle of length at most $m$, which is a contradiction.\\
If $d^+_X(z_1) \leq 1 $, then $d^+_{\overline{X}}(z_1) \geq \delta^+ -1$. Hence $d^{++}(r_{m+2}) \geq d^+_{\overline{X}}(z_1) + |\{r_3\}| \geq \delta^+ = d^+(r_{m+2})$, and $r_{m+2}$ is a Seymour vertex.\\
If $d^+_X(z_1) \geq 2$, then $z_1 \rightarrow z_2$ for some $z_2 \in N^+_{\overline{R}}(r_{m+2})$.
Note that $z_2 \nrightarrow z_3$ for any $z_3 \in N^+_{\overline{R}}(r_{m+2})$
since otherwise $r_{m+2} r_2\cdots r_{m+1} r_{m+3} z_1 z_2 z_3$ is a directed path of length \(k=m+4\), giving \(z_3 \notin N^+(r_{m+2})\), which is a contradiction. Therefore, $d^+_X(z_2) \leq 1$ and $d^+_{\overline{X}}(z_1) \geq \delta^+ - 1$.
Additionally, $z_2 \nrightarrow r_3$ since otherwise  $ v r_{m+3} z_1 z_2 r_3 \cdots r_{m+2} r_2 $ would extend to a $C(k,1)$, which is a contradiction.
Therefore, $d^{++}(r_{m+2}) \geq d^+_{\overline{X}}(z_2) + |\{r_3\}| \geq \delta^+ = d^+(r_{m+2})$, and $r_{m+2}$ is a Seymour vertex.\\
Finally, we proved that $r_{m+2}$ is a Seymour vertex, unless there is no vertex $ r \in N^+_{R \setminus B }(r_{m+1})$. 
\end{proof}
\end{claim}
By Claim~\ref{cl.2}, we may assume that $N^+_{R \setminus B }(r_{m+1}) = \emptyset$; which means that $N^+_R(r_{m+1})\subseteq \{r_0,r_1,r_{m+2}\}$, and hence $d^+_R(r_{m+1}) \leq 3$.

\begin{claim}
\label{cl.3}
$N^+_{R \setminus B }(r_2) = \emptyset$, unless $r_{m+2}$ is a Seymour vertex.
\begin{proof}
Suppose that there is a $r_{m+3} \in N^+_{R\setminus B}(r_2) $. Recall that $r_{m+2} \nrightarrow r_0$ and $d^+_R(r_{m+2}) \leq 2$. Hence, we may assume that $d^+(r_{m+2}) \leq \delta^+ + 1$.
We have $r_{m+3} \nrightarrow r_2$ and, as $\ell=m+2$, we have $r_{m+3} \nrightarrow r_0$. In addition, we have $r_{m+3} \nrightarrow r_3$, for otherwise $v r_0 r_1 r_2 r_{m+3} r_3 \cdots r_{m+2}$ would be a directed path of length $k=m+4$, giving that $r_{m+2} \notin N^+(v)$, which is a contradiction. Hence $r_0,r_2,r_3 \notin N^+_R(r_{m+3})$, and therefore $d^+_R(r_{m+3}) \leq \delta^+ - 4$. It follows, by Lemma~\ref{l.3}, that $|N^+_{\overline{R}}(r_{m+1},r_{m+2}, r_{m+3})| \geq \delta^+ + 2 -2 - 3 - (\delta^+ - 4) = 1$. So let $z_1\in N^+_{\overline{R}}(r_{m+1},r_{m+2}, r_{m+3})$ and let $X = N^+(r_{m+2})$.

Suppose first that $r_{m+2} \rightarrow r_2$. Hence $ r_3,r_{m+3} \in N^{++}(r_{m+2})$. We have $z_1 \nrightarrow r_{m+3}$, for otherwise $vr_1\cdots r_{m+2}z_1r_{m+3}$ would extend to a $C(k,1)$, which is a contradiction. Also, $z_1 \nrightarrow r_3$, for otherwise $r_3 \cdots r_{m+1} z_1 r_3$ would be a directed cycle of length at most $m$, a contradiction.\\
If $d^+_X(z_1) \leq 1$, then $d^+_{\overline{X}}(z_1) \geq \delta^+ -1$. It follows that $d^{++}(r_{m+2}) \geq d^+_{\overline{X}}(z_1) + |\{r_3,r_{m+3}\}| \geq \delta^+ -1 +2 = \delta^+ +1 \geq d^+(r_{m+2})$, and $r_{m+2}$ is a Seymour vertex.\\
If $d^+_X(z_1) \geq 2$, then there exists $z_2 \in X$ such that $z_1 \rightarrow z_2 $. We may assume that $z_2 \in N^+_{\overline{R}}(r_{m+2})$ because $z_1 \nrightarrow r_1$, for otherwise $v r_3 \cdots r_{m+2} z_1 r_1 r_2 r_{m+3}$ would extend to a $C(k,1)$, a contradiction. In addition, we have $z_2 \nrightarrow r_{m+3}$, since otherwise $v r_1\cdots r_{m+1} z_1 z_2 r_{m+3}$ would extend to a $C(k,1)$, a contradiction. Now if $d^+_X(z_2)=0$, then $d^{++}(r_{m+2}) \geq d^+_{\overline{X}}(z_2) + |\{r_{m+3}\}| \geq \delta^+ +1 \geq d^+(r_{m+2})$. Therefore $r_{m+2}$ is a Seymour vertex.
Otherwise, if $d^+_X(z_2) \geq 1$, then there exists $z_3 \in X$ such that $z_2 \rightarrow z_3$. Notice that $z_2 \nrightarrow r_1$ and $z_2 \nrightarrow r_2$, for otherwise $v r_4\cdots r_{m+2} z_1 z_2 r_1 r_2 r_{m+3}$ and $v r_3\cdots r_{m+2} z_1 z_2 r_2 r_{m+3}$ would extend to a $C(k,1)$, a contradiction. This means that we may assume $z_3 \in N^+_{\overline{R}}(r_{m+2})$. Observe that $z_3 \nrightarrow r_1$, $z_3 \nrightarrow r_2$ and $z_3 \nrightarrow z_4$ for any $z_4 \in N^+_{\overline{R}}(r_{m+2})$ since otherwise $ v r_3 \cdots r_{m+2} z_1 z_2 z_3 r_1 $, $ v r_3 \cdots r_{m+2} z_1 z_2 z_3 r_2 $, and $r_{m+2} r_2 \cdots r_{m+1} z_1 z_2 z_3 z_4$  would extend to a $C(k,1)$, a contradiction. Hence, $d^+_X(z_3) = 0$.
Moreover, $z_3 \nrightarrow r_3$, since otherwise $v r_{m+3} z_1 z_2 z_3 r_3 \cdots r_{m+2}$  would extend to a $C(k,1)$, which is a contradiction.
Therefore $d^{++}(r_{m+2}) \geq d^+_{\overline{X}}(z_3) +|\{r_3\}| \geq \delta^+ + 1 \geq d^+(r_{m+2})$, and $r_{m+2}$ is a Seymour vertex.

Suppose now that $r_{m+2} \nrightarrow r_2$. Accordingly, we must have $N^+_R(r_{m+2}) = \{r_1\}$ and $d^+(r_{m+2}) = \delta^+$.
Let $X = N^+(r_{m+2})$.
Recall that there exists $z_1 \in N^+_{\overline{R}}(r_{m+1},r_{m+2}, r_{m+3})$.\\
If $d^+_X(z_1) = 0$, then $d^{++}(r_{m+2}) \geq d^+_{\overline{X}}(z_1) \geq \delta^+ = d^+(r_{m+2})$, and $r_{m+2}$ is a Seymour vertex.\\
If $d^+_X(z_1) \geq 1$, then there exists $z_2 \in X$ such that $z_1 \rightarrow z_2$. We have $z_1 \nrightarrow~r_1$, for otherwise $v r_3 \cdots r_{m+2} z_1 r_1 r_2 r_{m+3}$ would extend to a $C(k,1)$, a contradiction. Thus $z_2 \in N^+_{\overline{R}}(r_{m+2})$. Now observe that, $z_2 \nrightarrow r_1$ and $z_2 \nrightarrow z_3$ for any $z_3 \in N^+_{\overline{R}}(r_{m+2})$, for otherwise $v r_3 \cdots r_{m+2} z_1 z_2 r_1 r_2$ and
$ r_{m+2} r_1 r_2 \cdots r_{m+1} z_1 z_2 z_3 $ would extend to a $C(k,1)$, which is a contradiction. Hence $d^+_X(z_2) = 0$, and therefore $r_{m+2}$ is a Seymour vertex.
Finally, we proved that $r_{m+2}$ is a Seymour vertex, unless there is no vertex $ r \in N^+_{R \setminus B }(r_2)$. 
\end{proof}
\end{claim}
By Claim~\ref{cl.3}, we may assume that $N^+_R(r_2) \subseteq \{r_3,r_4,r_5\}$, and hence $d^+_R(r_2) \leq~3$.
Now, we can find $z_1 \in N^+_{\overline{R}}(r_2,r_{m+1}, r_{m+2})$ because by Lemma~\ref{l.3}, Claims~\ref{cl.2} and~\ref{cl.3}, we have $N^+_{\overline{R}}(r_2,r_{m+1}, r_{m+2}) \geq \delta^+ + 2 - 3 - 3 - 2 \geq 1$.\\
Recall that. in this case, we have $N^+_R(r_{m+2}) \subseteq \{r_1, r_2\}$ and $d^+_R(r_{m+2}) \leq 2$. To finish the proof, we consider the remaining two cases $d^+_R(r_{m+2}) = 1$ and $d^+_R(r_{m+2}) = 2$.\\

1) Suppose first that $d^+_R(r_{m+2}) = 1$.\\
Hence $N^+_R(r_{m+2}) = \{r_i\}$ for $i \in\{1,2\}$. In this case, we have $d^+(r_{m+2}) = \delta^+$, for otherwise we get $d^{++}(v) \geq \delta^+$, which is a contradiction.\\
If $N^+_R(r_{m+2}) = \{r_1\}$, then $ r_2 \in N^{++}(r_{m+2})$.
Let $X = N^+(r_{m+2})$.
Assume that $d^+_X(z_1) \leq 1 $. Note that, $z_1 \nrightarrow r_2$. Hence, $d^{++}(r_{m+2}) \geq d^+_{\overline{X}}(z_1) + |\{r_2\}| \geq \delta^+ = d^+(r_{m+2})$, and $r_{m+2}$ is a Seymour vertex.
Assume now that $d^+_X(z_1) \geq 2$. Hence $z_1 \rightarrow z_2$ for some $z_2 \in N^+_{\overline{R}}(r_{m+2}) $.
Observe that $z_2 \nrightarrow z_3$ for any $z_3 \in N^+_{\overline{R}}(r_{m+2})$, and $ z_2 \nrightarrow r_1$ as otherwise $r_{m+2} r_1 \cdots r_{m+1} z_1 z_2 z_3$, and  $v r_{m+2} z_1 z_2 r_1 \cdots r_{m+1}$ would extend to a $C(k,1)$, which is a contradiction.
Thus $d^+_X(z_2) = 0$. Therefore $r_{m+2}$ is a Seymour vertex by Lemma~\ref{l.1}.\\
If $N^+_R(r_{m+2}) = \{r_2\}$, then $ r_3 \in N^{++}(r_{m+2})$.
Let $X = N^+(r_{m+2})$.
Suppose first that $d^+_X(z_1) \leq 1 $. We have $z_1 \nrightarrow r_3$, for otherwise $ r_3 \cdots r_{m+1} z_1r_3$ would be a directed cycle of length at most $m$, a contradiction. Thus, $d^{++}(r_{m+2}) \geq d^+_{\overline{X}}(z_1) + |\{r_3\}| \geq \delta^+ = d^+(r_{m+2})$. Therefore $r_{m+2}$ is a Seymour vertex.
Suppose now that $d^+_X(z_1) \geq 2$. Hence $z_1 \rightarrow z_2$ for some $z_2 \in N^+_{\overline{R}}(r_{m+2})$. 
Note that $z_2 \nrightarrow r_3$ since otherwise $v r_1 r_2 z_1 z_2 r_3 \cdots r_{m+2}$ would extend to a $C(k,1)$, which is a contradiction.
If $d^+_X(z_2) \leq 1 $, then $d^{++}(r_{m+2}) \geq d^+_{\overline{X}}(z_2) + |\{r_3\}| \geq \delta^+ = d^+(r_{m+2})$. Hence $r_{m+2}$ is a Seymour vertex.
Suppose then that $d^+_X(z_2) \geq 2$. This implies $z_2 \rightarrow z_3$ for some $z_3 \in N^+_{\overline{R}}(r_{m+2})$. Observe that $z_3 \nrightarrow z_4$ for any $z_4 \in N^+_{\overline{R}}(r_{m+2})$ as otherwise $r_{m+2} r_2 \cdots  r_{m+1} z_1 z_2 z_3 z_4$ would be a directed path of length \(k=m+4\), giving \(z_4 \notin X\), which is a contradiction. Additionally, $ z_3 \nrightarrow r_2 $, for otherwise $ v r_3 \cdots r_{m+2} z_1 z_2 z_3 r_2$ would extend to a $C(k,1)$, which is a contradiction.
Thus $d^+_X(z_3) = 0$. Therefore $r_{m+2}$ is a Seymour vertex by Lemma~\ref{l.1}.\\

2) Finally, suppose that $d^+_R(r_{m+2})  = 2$.\\
Clearly, we have $N^+_R(r_{m+2})  = \{r_1, r_2\}$. Recall that, by Claim~\ref{cl.3}, we may assume that $N^+_R(r_2) \subseteq \{r_3,r_4,r_5\}$. Observe that $r_2 \nrightarrow r_i$ for $i \in \{4,5\}$, since otherwise $r_2 r_i r_{i+1} \cdots r_{m+2} r_2$ would be a directed cycle of length at most $m$, which is a contradiction. Thus $N^+_R(r_2) = \{r_3\}$. It suffices to assume that $d^+(r_2)=\delta^+$. 
Let $X = N^+(r_2)$. Consider $z_1 \in N^+_{\overline{R}}(r_2,r_{m+1}, r_{m+2})$. Notice that $z_1 \nrightarrow r_3$, for otherwise $z_1 r_3 \cdots r_{m+1} z_1$ would be a directed cycle of length at most $m$, a contradiction.
If $d^+_X(z_1) = 0$, then $r_2$ is a Seymour vertex by Lemma~\ref{l.1}. Hence we may assume that $d^+_X(z_1) \geq 1$. So $ z_1 \rightarrow z_2 $ for some $z_2 \in N^+_{\overline{R}}(r_2)$.
Moreover, we have $z_2 \nrightarrow r_3$, for otherwise $v r_1 r_2 z_1 z_2 r_3 \cdots r_{m+1} r_{m+2}$ would extend to a $C(k,1)$, which is a contradiction.
If $d^+_X(z_2) = 0$, then $r_2$ is a Seymour vertex by Lemma~\ref{l.1}. Hence we may assume that $d^+_X(z_2) \geq 1$. Thus $z_2 \rightarrow z_3 $ for some $z_3 \in N^+_{\overline{R}}(r_2)$.
Now observe that, $z_3 \nrightarrow z_4$ for all $z_4 \in N^+_{\overline{R}}(r_2)$ since otherwise $r_2 r_3 \cdots r_{m+2} z_1 z_2 z_3 z_4$ would extend to a $C(k,1)$, which is a contradiction. Notice that, $z_3 \nrightarrow r_3$ since otherwise $ v r_{m+2} r_2 z_1 z_2 z_3 r_3 \cdots r_{m+1}$ would extend to a $C(k,1)$, which is a contradiction.
Hence $d^+_X(z_3) =0$, and therefore $r_2$ is a Seymour vertex by Lemma~\ref{l.1}.
\end{proof}

By Proposition~\ref{p.2} and Theorem~\ref{t.main}, we have the following.

\begin{corollary}
Let $D$ be a 7-anti-transitive oriented graph on $n$ vertices. If the minimum out-degree
and the minimum in-degree of $D$ are both at least $\frac{n}{3}$, then $D$ has a directed triangle.
\end{corollary}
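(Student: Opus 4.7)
The plan is a short proof by contradiction that composes the two results the paper has already established, namely Theorem~\ref{t.main} and Proposition~\ref{p.2}. Suppose toward contradiction that $D$ has no directed triangle, i.e., no $C_3$. Then, by definition, $D$ is a $3$-free oriented graph. Since the hypothesis also provides that $D$ is $7$-anti-transitive, and since $3 = 7 - 4$, the digraph $D$ satisfies both conditions required in Theorem~\ref{t.main} with $k = 7$. Applying that theorem produces a Seymour vertex $v \in V(D)$, i.e., a vertex with $d^{+}(v) \leq d^{++}(v)$.

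Next, I would feed this Seymour vertex into Proposition~\ref{p.2}. The hypothesis of the corollary guarantees that the minimum in-degree and the minimum out-degree of $D$ are both at least $\frac{n}{3}$, which is precisely the degree condition in Proposition~\ref{p.2}. That proposition then asserts that an oriented graph with these two balanced degree lower bounds and possessing a Seymour vertex must contain a directed triangle. This contradicts the assumption that $D$ has no $C_3$, and the corollary follows.

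There is essentially no genuine obstacle here, since the proof is a one-step chaining of Theorem~\ref{t.main} with Proposition~\ref{p.2}. The only point worth underlining in the write-up is the numerical coincidence that makes the argument work: the exponent in ``$(k-4)$-free'' becomes $3$ exactly when $k = 7$, and $3$-freeness is the precise property we get for free from the contrapositive assumption that $D$ contains no directed triangle. In particular, one does not need to verify $3$-freeness as an a priori hypothesis on $D$; it is handed to us by the assumption we are trying to contradict, and this is what makes $k = 7$ the natural first case of the Caccetta--H\"aggkvist direction beyond the previously handled range $2 \leq k \leq 6$.
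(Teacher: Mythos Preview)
Your proposal is correct and matches the paper's own justification, which simply cites Proposition~\ref{p.2} and Theorem~\ref{t.main} without further detail. The contradiction framing you spell out (no $C_3$ $\Rightarrow$ $3$-free $\Rightarrow$ Theorem~\ref{t.main} with $k=7$ yields a Seymour vertex $\Rightarrow$ Proposition~\ref{p.2} forces a $C_3$) is exactly the intended one-line deduction, and your remark that $3$-freeness comes for free from the contrapositive assumption is the key observation making $k=7$ the relevant case.
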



\end{document}